\documentclass{article}
\usepackage{amsmath,amssymb}
\usepackage{longtable}
\usepackage{fancyhdr}

\setlength{\textwidth}{15cm}
\setlength{\oddsidemargin}{0.5cm}
\setlength{\evensidemargin}{0.5cm}

\usepackage{amsthm}

\newtheorem{theo}{Theorem}[section]

\newtheorem{pr}[theo]{Proposition}
\newtheorem{col}[theo]{Corollary}

\theoremstyle{definition}
\newtheorem{defn}[theo]{Definition}

\newtheorem{ex}[theo]{Example}
\newtheorem{nt}[theo]{Notation}

\def\rank{\mathop{\rm rank}\nolimits}
\def\dim{\mathop{\rm dim}\nolimits}

\def\ad{\mathop{\rm ad}\nolimits}

\def\Tr{\mathop{\rm Tr}\nolimits}

\def\C{\mathop{\mathbb{C}}\nolimits}
\def\Z{\mathop{\mathbb{Z}}\nolimits}

\def\g{\mathop{\mathfrak{g}}\nolimits}

\def\Hom{\mathop{\rm Hom}\nolimits}
\def\sl{\mathop{\mathfrak{sl}}\nolimits}
\def\sp{\mathop{\mathfrak{sp}}\nolimits}
\def\so{\mathop{\mathfrak{so}}\nolimits}
\def\gl{\mathop{\mathfrak{gl}}\nolimits}
\def\l{\mathop{\mathfrak{l}}\nolimits}

\def\U{\mathop{\mathcal{U}}\nolimits}
\def\Lie{\mathop{\mathrm{Lie}}\nolimits}

\newcommand\bigzerou{\smash{\lower.3ex\hbox{\Huge 0}}} 
         
\newcommand\bigstaru{\smash{\lower.3ex\hbox{\Huge $*$}}} 

\numberwithin{equation}{section}

\allowdisplaybreaks[1]
\title{Reduced contragredient Lie algebras and PC Lie algebras}
\vspace{50truept}
\chead{Graded Lie algebras}
\author{ Nagatoshi Sasano}
\pagestyle{fancy}
\lhead[]{}
\rhead[]{}

\begin{document}
\thispagestyle{empty}

\newpage
\begin{center}{\huge  Graded Lie algebras and regular prehomogeneous vector spaces with one-dimensional scalar multiplication \footnote{{\bf 2010 Mathematic Subjects Classification}: Primary 11S90, Secondary 17B70, 17B65\\Keywords and phrases: prehomogeneous vector spaces, regular prehomogeneous vector spaces, graded Lie algebras}}\end{center}
\vspace{50truept}

\begin{center}{Nagatoshi SASANO}\end{center}
\begin{abstract}
The aim of this paper is to study relations between regular reductive PVs with one-dimensional scalar multiplication and the structure of graded Lie algebras.
We will show that the regularity of such PVs is described by an $\sl _2$-triplet of a graded Lie algebra.
\end{abstract}

\section* {Introduction} 
A prehomogeneous vector space (abbrev. PV) is a triplet $(G,\rho,V)$ consists of a connected algebraic group $G$ and its finite-dimensional rational representation $(\rho ,V)$ such that there exits a Zariski dense orbit.
Some particular cases of PVs are obtained from a graded finite-dimensional semisimple Lie algebra $\mathfrak{l}=\bigoplus _{n\in \Z}\mathfrak{l}_n$ as $(\mathfrak{l}_0,\mathfrak{l}_1)$.
Such spaces are named PVs of parabolic type by H.Rubenthaler (see, for example, \cite {ru}) and studied by him.
Then, today, it is known that PVs of parabolic type have rich structures related to the structure of graded Lie algebras. 
For example, the regularity of irreducible PVs of parabolic type $(\mathfrak{l}_0,\mathfrak{l}_1)$ is closely related to a subalgebra of $\mathfrak{l}$ which is isomorphic to $\sl _2$ (\cite [Corollaire II.2.15]{ru}).
\par 
In \cite {ru}, H.Rubenthaler classified PVs of parabolic type by using Dynkin diagrams of finite-dimensional semisimple Lie algebras.
On the other hand, it is known that there exist infinitely many PVs which are not of parabolic type.
\par 
Recently, the author and H.Rubenthaler independently showed that any finite-dimensional reductive Lie algebra and its finite-dimensional, of course any PV with a reductive group, can be embedded into some (finite or infinite-dimensional) graded Lie algebra (\cite [the author]{Sa3}, \cite [H.Rubenthaler]{Ru}).
Thus, it is expected that we can extend the theory of PVs of parabolic type to the general theory of PVs.
The aim of this paper is to study relations between the regularity of (might not be of parabolic type) PVs with $1$-dimensional scalar multiplication and the structure of graded Lie algebras.
\begin{nt}
\begin{itemize}
\item {For any vector space $W$, we denote the set of all linear maps $f\colon W\rightarrow \C$ by $\Hom (W,\C)$. Moreover, when $W$ is finite-dimensional, we denote by $W^*=\Hom(W,\C)$ simply}.
\item {We denote the zero-matrix of size $k\times l$ by $O_{k,l}$ or $O_k$ when $k=l$, the unit matrix of size $k$ by $I_k$.
We denote the set of all matrices of size $k\times l$ by $M(k,l)$.
}
\item {In this paper, all objects are defined over the complex number field $\C $.}
\end{itemize}
\end{nt}

\section {Graded Lie algebras}
The first aim of this section is to recall some notion and results on graded Lie algebras, prehomogeneous vector spaces, which will be used in the next section.
The second aim is to  summarize the results of the author and of H.Rubenthaler (Theorem \ref {th;stapfund}).
For this, let us start with the definition of standard pentads by the author.
This gives a (finite or infinite-dimensional) graded Lie algebra.
\begin{defn}[\text {\cite [Definitions 2.1 and 2.2]{Sa3}}]
Let $\g $ be a Lie algebra, $\pi $ a representation of $\g $ on $U$, $\U $ a $\g $-submodule of $\Hom (U,\C)$, $B$ a non-degenerate invariant bilinear form on $\g$.
When a pentad $(\g,\pi ,U,\U,B)$ satisfies the following conditions, we say that the pentad $(\g,\pi,U,\U,B)$ is a standard pentad:
\begin{enumerate}
\item {the restriction of the canonical pairing $\langle \cdot ,\cdot \rangle :U\times \Hom (U,\C)\rightarrow \C$ to $U\times \U$ is non-degenerate},
\item {there exists a linear map $\Phi_{\pi }:U\otimes \U \rightarrow \g $, called the $\Phi $-map of the pentad, satisfying an equation
$$
B(a,\Phi _{\pi }(v\otimes \phi ))=\langle \pi (a)v,\phi \rangle
$$}
for any $a\in \g$, $v\in U$ and $\phi \in \U$.
\end{enumerate}
\end{defn}
\begin{theo}[\text {\cite [Theorem 2.15]{Sa3}}]\label {th;staplie}
For any standard pentad $(\g,\pi ,U,\U,B)$, there exists a (finite or infinite dimensional) graded Lie algebra $L(\g,\pi ,U,\U,B)=\bigoplus _{n\in \Z}U_n$ such that 
$$
U_0\simeq \g\text { (as Lie algebras)},\quad U_1\simeq U,\quad U_{-1}\simeq \U\text { (as $U_0\simeq \g $-modules)}
$$
and that the restricted bracket product $[\cdot ,\cdot ]\colon U_1\times U_{-1}\rightarrow U_0$ is induced by the $\Phi $-map $\Phi _{\pi }$ of $(\g,\rho,U,\U,B)$.
\end{theo}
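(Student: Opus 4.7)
The plan is to assemble $\g$, $U$, and $\U$ into a local Lie algebra on the graded vector space $\U \oplus \g \oplus U$ (placed in degrees $-1$, $0$, $1$), and then extend it to a full $\Z$-graded Lie algebra via a free-Lie-algebra construction modulo a maximal graded ideal. This is essentially the method used by V.~Kac to extend a local Lie algebra to a minimal transitive graded one.

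First I would define the local brackets. On $\g$, use the bracket already given. Set $[a,v]:=\pi(a)v$ for $a\in\g$, $v\in U$, and $[a,\phi]:=\pi^{*}(a)\phi$ for $\phi\in\U$, where $\pi^{*}$ is the dual representation (well-defined on $\U$ since $\U$ is a $\g$-submodule of $\Hom(U,\C)$). Finally set $[v,\phi]:=\Phi_{\pi}(v\otimes\phi)\in\g$. Antisymmetry is built in, and the Jacobi identities not involving the pairing $U\times\U$ are automatic. The key case,
\[
[a,[v,\phi]]=[[a,v],\phi]+[v,[a,\phi]],
\]
translates to $\ad(a)\,\Phi_{\pi}(v\otimes\phi)=\Phi_{\pi}(\pi(a)v\otimes\phi)+\Phi_{\pi}(v\otimes\pi^{*}(a)\phi)$, which one checks by pairing both sides against an arbitrary $b\in\g$ using $B$: invariance of $B$ together with the defining equation $B(c,\Phi_{\pi}(v\otimes\phi))=\langle\pi(c)v,\phi\rangle$ turns the identity into $\langle\pi([b,a])v,\phi\rangle=\langle\pi(b)\pi(a)v,\phi\rangle-\langle\pi(a)\pi(b)v,\phi\rangle$, which is just the representation property of $\pi$. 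Non-degeneracy of $B$ on $\g$ then yields the required equality.

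Next I would extend to a $\Z$-graded Lie algebra. Let $F$ be the free Lie algebra on $U\oplus\U$ with the grading induced by $\deg U=1$, $\deg\U=-1$. The actions $\pi$ and $\pi^{*}$ extend canonically to a $\g$-action on $F$ by graded derivations, so one can form the graded semidirect product $\tilde L:=\g\ltimes F$ with $\g$ in degree $0$. Inside $\tilde L_{0}=\g\oplus F_{0}$, the free bracket $[v,\phi]_{F}\in F_{0}$ and the element $\Phi_{\pi}(v\otimes\phi)\in\g$ are distinct; we want them to coincide in the final algebra. Let $I$ be the sum of all graded ideals $\mathfrak{i}\subseteq\tilde L$ whose degree-$0$ part contains every $[v,\phi]_{F}-\Phi_{\pi}(v\otimes\phi)$ and which satisfy $\mathfrak{i}\cap\g=\mathfrak{i}\cap U=\mathfrak{i}\cap\U=0$. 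A Zorn-type argument shows that the family of such ideals is closed under sums, so $I$ itself is one of them (indeed the maximal one), and we set $L(\g,\pi,U,\U,B):=\tilde L/I$ with the induced grading $L=\bigoplus_{n}U_{n}$.

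By construction the canonical maps $\g\to U_{0}$, $U\to U_{1}$, $\U\to U_{-1}$ are injective and $\g$-equivariant, and the induced bracket $U_{1}\times U_{-1}\to U_{0}$ is exactly $\Phi_{\pi}$. The remaining point, and the real obstacle, is to check that $\g\to U_{0}$ is also surjective, i.e.\ that no spurious degree-$0$ elements of $F$ survive in the quotient. This is done by noting that $F_{0}$ is generated as a $\g$-module by the brackets $[F_{1},F_{-1}]=[U,\U]_{F}$ via the graded Jacobi identity applied inductively to $[F_{k},F_{-k}]$ for $k\ge 2$, so once the relations $[v,\phi]_{F}=\Phi_{\pi}(v\otimes\phi)$ are imposed modulo $I$, the image of $F_{0}$ in $U_{0}$ is absorbed into $\g$. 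The injectivity on the local triple, which is the subtler half of the Zorn argument, follows from the transitivity of the $\g$-action (via $B$-duality between $U$ and $\U$): any nonzero homogeneous ideal meeting degree $\pm 1$ or $0$ nontrivially would, by repeatedly bracketing against $U$ or $\U$, produce a nontrivial intersection with $U$, $\U$, or $\g$, contradicting its defining property.
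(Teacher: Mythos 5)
Your argument is correct in outline, but it is not the paper's construction: it is Kac's. The theorem is quoted from \cite[Theorem 2.15]{Sa3}, and the paper records that there the graded pieces $U_0,U_{\pm1},U_{\pm2},\dots$ are built inductively, with $U_k$ realized as a concrete subspace of $\Hom(U_{-1},U_{k-1})$; that is a transitive prolongation which produces the graded Lie algebra directly, with no free object and no quotient. What you do instead is (i) check that $\U\oplus\g\oplus U$ is a local Lie algebra --- your verification of the mixed Jacobi identity by pairing against $b\in\g$ and using invariance of $B$ together with the defining equation of $\Phi_{\pi}$ is exactly the right computation, and it is the only nontrivial one --- and then (ii) pass to the minimal graded Lie algebra with this local part via a free construction modulo the largest graded ideal avoiding the local part. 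That is Rubenthaler's $\g_{min}(\Gamma(\g,B,\pi))$ built from \cite[Proposition 4]{ka-1}, and Theorem \ref{th;stapfund} of the present paper is precisely the assertion that the two constructions yield isomorphic graded Lie algebras. So your route is legitimate but proves the Rubenthaler/Kac form of the statement; the payoff of the paper's inductive $\Hom$-space construction is that transitivity and the containment $U_k\subset\Hom(U_{-1},U_{k-1})$ come for free, which is exactly what this paper uses later (in its proof of Theorem \ref{th;stapfund}).

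The one place where your write-up has a genuine gap is the sentence ``By construction the canonical maps $\g\to U_0$, $U\to U_1$, $\U\to U_{-1}$ are injective.'' This is not automatic: you must show that the graded ideal of $\tilde L$ generated by the elements $[v,\phi]_F-\Phi_{\pi}(v\otimes\phi)$ meets $\g$, $U$ and $\U$ trivially; otherwise no ideal in the family you sum over contains the relations, and the quotient either fails to satisfy $[v,\phi]=\Phi_{\pi}(v\otimes\phi)$ or collapses part of the local data. This injectivity is the actual content of Kac's Proposition 4, proved there by exhibiting a faithful realization of the local part, for instance by letting $\g$ and $U$ act through the local brackets on the free Lie algebra generated by $\U$. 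Your closing remark about transitivity addresses the minimality and the surjectivity of $\g\to U_0$, not this injectivity. Once that step is supplied (or \cite[Proposition 4]{ka-1} is invoked for it), the proof is complete.
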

In the sense of Theorem \ref {th;staplie}, we can obtain a graded Lie algebra such that a given representation of a reductive Lie algebra can be embedded into its local part.
To prove Theorem \ref {th;staplie}, the author constructed graded components $U_{0}, U_{\pm 1}, U_{\pm 2}\ldots $ inductively.
\par 
On the other hand, H.Rubenthaler have obtained similar result independently in \cite {Ru}.
In \cite [Theorem 3.1.2]{Ru}, he constructed a local Lie algebra $\Gamma (\g _0,B_0,\pi )$ from a fundamental triplet $(\g_0,B_0,(\pi,U))$, consists of a quadratic Lie algebra $(\g_0,B_0)$ and its finite-dimensional representation $(\pi,U)$, and constructed a graded Lie algebra $\g _{min}(\Gamma (\g_0,B_0,\pi ))$ using \cite [Proposition 4]{ka-1} due to V.Kac.
Although the constructions of him and of the author are based on different theories, their goals coincide.
\begin{theo}\label {th;stapfund}
Let $(\g, \pi ,U,U^*,B)$ be a standard pentad with finite-dimensional objects and a symmetric bilinear form $B$.
Then the corresponding Lie algebra $L(\g,\pi,U,U^*,B)$ is isomorphic to the Lie algebra $\g _{min}(\Gamma (\g ,B,\pi ))$ as graded Lie algebras.
\end{theo}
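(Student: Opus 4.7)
The plan is to exhibit both $L(\g,\pi,U,U^*,B)$ and $\g_{min}(\Gamma(\g,B,\pi))$ as graded Lie algebras satisfying the universal property (of Kac, \cite{ka-1}) that characterizes $\g_{min}$ of a local Lie algebra, and then conclude by uniqueness. Recall that, given a local Lie algebra $\mathfrak{L}_{-1}\oplus\mathfrak{L}_0\oplus\mathfrak{L}_1$, the algebra $\g_{min}$ is the unique $\Z$-graded Lie algebra whose local part agrees with $\mathfrak{L}$, which is generated by its local part, and which is minimal in the sense that it has no non-zero graded ideal meeting only the non-local part.

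First I would check that the two constructions share the same local Lie algebra. By Theorem \ref{th;staplie} we have $U_0\simeq\g$, $U_{\pm 1}\simeq U,U^*$, and the bracket $U_1\times U_{-1}\to U_0$ is given by the $\Phi$-map $\Phi_\pi$; on the other hand, Rubenthaler's local Lie algebra $\Gamma(\g,B,\pi)$ is constructed from exactly the data $(\g,B,(\pi,U))$, and the bracket on its local part is, by construction, the unique map $U\otimes U^*\to\g$ satisfying $B(a,[v,\phi])=\langle\pi(a)v,\phi\rangle$. Since $B$ is non-degenerate and symmetric and the pairing $U\times U^*\to\C$ is non-degenerate, this map coincides with $\Phi_\pi$. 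Thus the two local Lie algebras are canonically isomorphic.

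Next I would verify the two universality conditions for $L(\g,\pi,U,U^*,B)$. The generation property is built into the inductive construction in \cite{Sa3}: each graded component $U_{\pm n}$ for $n\ge 2$ is defined as (a quotient of) the image of iterated brackets of elements of $U_{\pm 1}$, so the whole $L$ is generated by its local part. For the minimality, one should show that no non-zero element of $U_n$ ($|n|\ge 2$) has trivial bracket with all of $U_{\mp 1}$; this is precisely the non-degeneracy condition built into the construction of the higher components (the author's construction quotients out by exactly this kernel at each step). Combining both properties with Kac's characterization in \cite[Proposition 4]{ka-1}, one obtains a unique graded Lie algebra isomorphism $L(\g,\pi,U,U^*,B)\xrightarrow{\sim}\g_{min}(\Gamma(\g,B,\pi))$ extending the identification of local parts.

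The main obstacle is the verification of the minimality (no annihilator in positive or negative degree) for the author's construction, since this requires unpacking the inductive definition of $U_{\pm n}$ and checking that the quotient taken at each inductive step is exactly the kernel of the pairing induced by $B$ and $\Phi_\pi$. Once this is done, the identification is forced by the universal property; the symmetry of $B$ and the finite-dimensionality of $\g$ and $U$ are used only to guarantee that $\Hom(U,\C)=U^*$ carries a bona fide $\g$-module structure matched with the one used by Rubenthaler.
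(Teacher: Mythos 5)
Your proposal matches the paper's argument: the paper likewise identifies the local parts, invokes Kac's characterization of $\g_{min}$ as the unique minimal graded Lie algebra with a given local part, and establishes minimality of $L(\g,\pi,U,U^*,B)$ by observing that the inductive construction embeds $U_k$ into $\Hom(U_{-1},U_{k-1})$, so no non-zero graded ideal can avoid the local part. Your extra attention to the generation property and to matching the $\Phi$-map with Rubenthaler's bracket makes the same argument slightly more explicit, but it is essentially the same proof.
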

\begin{proof}
We can check our claim by a similar argument in \cite [p.1278, Proposition 5]{ka-1}.
Suppose that $L(\g,\pi,U,U^*,B)=\bigoplus _{n\in \Z}U_n$ is not minimal (for detail on minimal Lie algebras, see \cite [Definition 6]{ka-1}).
Then there exists a non-zero graded ideal $J=\bigoplus _{n\in \Z}(J\cap U_n)$ such that $J\cap (U_{-1}\oplus U_0\oplus U_1)=J\cap (U^*\oplus \g\oplus U)=\{0\}$.
Take an integer $k$ such that $J\cap U_k\neq \{0\}$ and $J\cap U_n=\{0\}$ for any $|n|<|k|$.
If $k>0$, there exists a non-zero element $v\in J\cap U_n$ such that $[v,U_{-1}]~\{0\}$.
It contradicts to the construction due to the author that $U_k\subset \Hom (U_{-1},U_{k-1})$ (see \cite [Definition 2.9]{Sa3}).
The case where $k<0$ is similar.
\end{proof}
Thus, we can summarize results on the relation between the theory of graded Lie algebras and the theory of representation theory due to the author and due to H.Rubenthaler.
For example, on the structure of graded Lie algebras constructed by the author and by H.Rubenthaler, we have the following result.
\begin{theo}[\text {\cite [Theorem 3.2]{Sa5}}]
Let $\g $ be a finite-dimensional reductive Lie algebra and $(\pi ,U)$ its finite-dimensional completely reducible representation.
Then for any non-degenerate invariant symmetric bilinear form $B$ on $\g $, we have isomorphisms of Lie algebras (up to grading):
\begin{align}
L(\g, \pi ,U,U^*,B)\simeq \g _{min}(\Gamma (\g _0,B,\pi )) \simeq G^{\prime }(C)\oplus \mathfrak{z}\oplus \Delta,\label {iso;1}
\end{align}
where $C$ is a symmetrizable square matrix and $G^{\prime }(C)$ is the reduced contragredient Lie algebra with Cartan matrix $C$.
The Lie algebraic structure of the right hand side of (\ref {iso;1}) is defined by
\begin{align*}
&[G^{\prime }(C)\oplus \mathfrak{z}\oplus \Delta ,G^{\prime }(C)\oplus \mathfrak{z}\oplus \Delta ]=G^{\prime }(C)\oplus \mathfrak{z},\\
&[\mathfrak{z},G^{\prime }(C)\oplus \mathfrak{z}\oplus \Delta ]=[\Delta ,\Delta ]=\{0\},\\
&\text {the adjoint representation of $\Delta $ on $G^{\prime }\oplus \mathfrak{z}\oplus \Delta $ is diagonalizable}.
\end{align*}
\end{theo}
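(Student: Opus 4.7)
The first isomorphism $L(\g,\pi,U,U^*,B)\simeq \g_{min}(\Gamma(\g,B,\pi))$ is an immediate application of Theorem \ref{th;stapfund}, so the substantive task is to establish the second isomorphism. My plan is to realize $L(\g,\pi,U,U^*,B)$ through a Chevalley--Serre-type presentation read off from suitable highest/lowest weight data, extract a symmetrizable Cartan matrix $C$, and then split off a central subspace $\mathfrak{z}$ and a toral subspace $\Delta$ from what remains of the Cartan of $\g$ after the coroots have been accounted for.

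For the setup, write $\g=\g_{ss}\oplus Z(\g)$ with $\g_{ss}$ semisimple, and use complete reducibility of $\pi$ to choose a Cartan subalgebra $\h\subset\g_{ss}$ so that $\h\oplus Z(\g)$ acts diagonally on $U$. Decompose $U=\bigoplus_i V_i$ into $\g$-irreducibles, pick highest weight vectors $v_i\in V_i$ with weights $\lambda_i$, and dual lowest weight vectors $\phi_i\in V_i^*\subset U^*$ normalized so that $\langle v_i,\phi_i\rangle\neq 0$. Fix Chevalley generators $\{e_\alpha,f_\alpha,h_\alpha\}$ of $\g_{ss}$ indexed by simple roots $\alpha$, and set $h_i:=\Phi_\pi(v_i\otimes\phi_i)\in \h\oplus Z(\g)$.

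To build $C$ and identify $G'(C)$: the local part $U^*\oplus\g\oplus U$ generates $L(\g,\pi,U,U^*,B)$ by the inductive construction behind Theorem \ref{th;staplie}, and standard highest weight theory shows this local part is itself generated, modulo the torus $\h\oplus Z(\g)$, by the enlarged set $\{e_\alpha,f_\alpha\}\cup\{v_i,\phi_i\}$. The scalars $c_{\alpha,\beta}=\beta(h_\alpha)$ (classical block), $c_{\alpha,i}=\lambda_i(h_\alpha)$, $c_{i,\alpha}=\alpha(h_i)$, and $c_{i,j}=\lambda_j(h_i)$ (new block) assemble into a square matrix $C$. Symmetry of $B$ together with the defining identity $B(a,\Phi_\pi(v\otimes\phi))=\langle\pi(a)v,\phi\rangle$ produces a diagonal matrix $D$ such that $DC$ is symmetric, so $C$ is symmetrizable. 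The universal contragredient Lie algebra $G(C)$ then admits a homomorphism into $L(\g,\pi,U,U^*,B)$, and the minimality used in the proof of Theorem \ref{th;stapfund}, namely that no nonzero graded ideal meets the local part trivially, identifies its image with the reduced contragredient Lie algebra $G'(C)$.

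It remains to dispose of the complement of $\Span\{h_\alpha,h_i\}$ inside $\h\oplus Z(\g)$. Split $Z(\g)=Z_0\oplus Z_1$ with $Z_0=\ker(\pi|_{Z(\g)})$; the elements of $Z_0$ commute with every generator and are therefore central in all of $L(\g,\pi,U,U^*,B)$ by induction on the grading, which provides $\mathfrak{z}$. A chosen complement $\Delta$ to $\Span\{h_\alpha,h_i\}$ in the full torus acts by scalars on each generator, hence diagonalizably on every graded component $U_n$, which yields $[\Delta,\Delta]=\{0\}$ and the asserted diagonal action of $\Delta$ on $G'(C)\oplus\mathfrak{z}\oplus\Delta$. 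The remaining bracket relations then follow from $[\mathfrak{z},L]=0$ and from $[G'(C),G'(C)]=G'(C)$ for the derived contragredient algebra. The main obstacle is this last paragraph: one must both rigorously invoke minimality to pass from $G(C)$ to $G'(C)$, and ensure that $\Delta$ can be chosen as a genuine abelian subalgebra transverse to the coroots, which requires care about which elements of $Z(\g)$ may already lie in $\Span\{h_i\}$.
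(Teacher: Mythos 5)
This theorem is quoted from \cite[Theorem 3.2]{Sa5} and the present paper supplies no proof of it, so there is no in-paper argument to compare against; I can only assess your sketch on its own terms. Your overall strategy --- enlarge the Chevalley generators of $[\g,\g]$ by extremal weight vectors of the irreducible summands of $U$ and $U^*$, read off a symmetrizable matrix $C$ from $B$ and the $\Phi$-map, and peel off a central part and a diagonalizable part of the Cartan --- is indeed the route taken in the cited references, and the symmetrizability computation via $B(a,\Phi_\pi(v_i\otimes\phi_i))=\lambda_i(a)\langle v_i,\phi_i\rangle$ is sound. But two steps as written do not go through. First, your choice of extremal vectors is the wrong way around: if $v_i$ is a \emph{highest} weight vector of $V_i$ and $\phi_i$ the lowest weight vector of $V_i^*$, then $[f_\alpha,v_i]=\pi(f_\alpha)v_i$ and $[e_\alpha,\phi_i]=\pi^*(e_\alpha)\phi_i$ are in general nonzero, so the cross relations $[e_j,f_k]=0$ $(j\neq k)$ of a contragredient local part fail and no homomorphism $G(C)\rightarrow L(\g,\pi,U,U^*,B)$ exists for your generating set. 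One must take $v_i$ to be a lowest weight vector of $V_i$ (as for the generator $e_{\alpha_0}$ of $\mathfrak{l}_1$ in a parabolic grading) and $\phi_i$ the highest weight vector of $V_i^*$ pairing nontrivially with it.

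Second, and more seriously, the identification of the image of $G(C)$ with the \emph{reduced} algebra $G'(C)$ is the actual content of the theorem, and your appeal to ``the minimality used in the proof of Theorem \ref{th;stapfund}'' does not deliver it. The minimality of $L(\g,\pi,U,U^*,B)$ concerns graded ideals, for the $\Z$-grading, meeting the local part $U^*\oplus\g\oplus U$ trivially; the quotient defining $G'(C)$ concerns ideals of $\tilde G(C)$ graded by the root lattice of $C$ and meeting the much smaller local part $\bigoplus_i\C f_i\oplus\bigoplus_i\C h_i\oplus\bigoplus_i\C e_i$ trivially. The image of $\tilde G(C)$ is only a subalgebra of $L$, not a graded ideal, so an ideal of that image witnessing non-reducedness need not extend to an ideal of $L$, and the contradiction you want is not available without further argument. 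Relatedly, your $\mathfrak{z}=\Ker(\pi|_{Z(\g)})$ cannot be the $\mathfrak{z}$ of the statement: that kernel is $B$-orthogonal to $[\g,\g]+\Phi_\pi(U\otimes U^*)$ and hence does not lie in the derived algebra, whereas the statement requires $[\,\cdot\,,\cdot\,]=G'(C)\oplus\mathfrak{z}$; the summand $\mathfrak{z}$ must instead be the center of the derived algebra sitting inside $\Span\{h_\alpha,h_i\}$. These are exactly the points the reference \cite{Sa5} is devoted to, so they cannot be waved through.
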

On the other hand, H.Rubenthaler obtained important results on relative invariant in \cite [section 4]{Ru}.
Remain of this paper, we shall use notion and notations based on the author's works unless noted otherwise.
Here, we need to import some notion by H.Rubenthaler to the theory standard pentads.
\begin{defn}[\text {grading element, \cite [Remark 3.4.4]{Ru}}]
Let $(\g,\pi,U,\U,B)$ be a standard pentad.
When an element $H_0\in \g $ satisfies the following conditions:
$$
[H_0,A]=0,\quad [H_0,X]=2X,\quad [H_0,Y]=-2Y
$$
for any $A\in \g$, $X\in U$, $Y\in \U$, we say that $H$ is a grading element of the pentad $(\g,\pi,U,\U,B)$ or the graded Lie algebra $L(\g,\pi,U,\U,B)$ or its local part $\U \oplus \g \oplus U$.
\end{defn}
\begin{defn}[\text {$\sl _2$-triplets, \cite [p.53]{Ru}}]
Let $(\g,\pi,U,\U,B)$ be a standard pentad.
When a triple $(y,h,x)\in L(\g,\pi,U,\U,B)^3$ satisfies the following conditions, we say that the pentad is an $\sl _2$-triple:
$$
[h,x]=2x,\quad [h,y]=-2y,\quad [x,y]=h.
$$
\end{defn}
We give the notion of prehomogeneity of standard pentads.
\begin{defn}[\text {\cite [Definition 2.2]{Sa2}}]
Let $(\g,\pi,U,\U,B)$ be a standard pentad with $\Phi $-map $\Phi _{\pi }$.
When the pentad satisfies the following condition, we say that the pentad $(\g,\pi,U,\U,B)$ is a prehomogeneous pentad:
\begin{itemize}
\item {there exists an element $X\in U$ such that a linear map $\Phi _{\pi }(X\otimes \cdot )\colon \U\rightarrow \g$ defined by $\phi \in \U\mapsto \Phi _{\pi }(X\otimes\phi )\in \g$ is injective.}
\end{itemize}
In other words, a pentad $(\g,\pi,U,\U,B)$ is prehomogeneous if and only if its corresponding Lie algebra $L(\g,\pi,U,\U,B)=\bigoplus _{n\in \Z}U_n$ has an element $X\in U_1$ such that the adjoint map $\ad X\colon U_{-1}\rightarrow U_0$ is injective.
Moreover, we call such an element a generic point of the pentad.
\end{defn}
The terms ``prehomogeneous'' and ``generic points''come from prehomogeneous vector spaces.
\begin{defn}[\text {see, for example \cite [p.35, Definition 1]{SK}}]
Let $G$ be a connected linear algebraic group and $(\rho,V)$ its finite-dimensional rational representation.
We call a triplet $(G,\rho ,V)$ a prehomogeneous vector space (abbrev. PV) when there exists a Zariski-dense orbit $\rho (G)x$ in $V$.
In particular, when a PV $(G,\rho,V)$ has a reductive group $G$, we call it a reductive PV.
An element $x^{\prime }\in V$ is called a generic point when it belongs to the Zariski-dense orbit $\rho (G)x$.
\end{defn}
\begin{theo}[\text {\cite [Theorems 2.1, 2.4]{Sa2}}]
Let $G$ be a finite-dimensional reductive algebraic group and $\g =\Lie (G)$ its Lie algebra.
Let $(\rho ,V)$ be a finite-dimensional representation of $G$ and $(d\rho ,V)$ its infinitesimal representation of $\g $.
Then the following two conditions are equivalent:
\begin{enumerate}
\item {A triplet $(G,\rho,V)$ is a PV.}
\item {A pentad $(\g,d\rho,V,V^*,B)$ is a prehomogeneous pentad for any non-degenerate invariant symmetric bilinear form $B$.}
\end{enumerate}
Moreover, an element $x\in V$ is a generic point of $(G,\rho,V)$ in the sense of PVs if and only if $x$ is a generic point of $(\Lie (G),d\rho,V,V^*,B)$ in the sense of prehomogeneous pentads.
\end{theo}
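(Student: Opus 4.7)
The plan is to show that, for any fixed $x \in V$, the condition that $\rho(G)x$ is Zariski-dense in $V$ is equivalent to the infinitesimal surjectivity $d\rho(\g)x = V$, and then use the non-degeneracy of $B$ to dualize this into the injectivity of $\Phi_{d\rho}(x\otimes\cdot)\colon V^*\to\g$. This will yield both the equivalence (1)$\Leftrightarrow$(2) and the coincidence of generic points in one stroke, and it will make the independence from the choice of $B$ automatic.

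The algebraic core is a $B$-duality computation. For fixed $x \in V$, the defining equation of the $\Phi$-map reads
$$
B\bigl(a,\Phi_{d\rho}(x\otimes\phi)\bigr)=\langle d\rho(a)x,\phi\rangle \qquad (a\in\g,\ \phi\in V^*).
$$
Hence $\phi\in\ker\Phi_{d\rho}(x\otimes\cdot)$ iff $\Phi_{d\rho}(x\otimes\phi)$ is $B$-orthogonal to all of $\g$, and by non-degeneracy of $B$ this forces $\Phi_{d\rho}(x\otimes\phi)=0$; equivalently, $\phi$ annihilates the subspace $d\rho(\g)x\subset V$. Consequently $\Phi_{d\rho}(x\otimes\cdot)$ is injective iff no nonzero $\phi\in V^*$ kills $d\rho(\g)x$, i.e.\ iff $d\rho(\g)x=V$. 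Since this equivalent condition is intrinsic to $(G,\rho,V)$ and involves no bilinear form, it is independent of the choice of $B$, which accounts for the ``for any $B$'' wording in condition (2).

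It remains to identify $d\rho(\g)x = V$ with $x$ being a generic point in the PV sense. Because $V$ is a smooth irreducible variety and $G$ is connected, the orbit $\rho(G)x$ is a locally closed smooth subvariety of $V$ whose tangent space at $x$ equals $d\rho(\g)x$. Thus $\rho(G)x$ is open in $V$ iff $d\rho(\g)x=V$, and for an algebraic group action on an irreducible variety the existence of a Zariski-dense orbit is equivalent to the existence of an open orbit, the generic points then being precisely the elements of that open orbit. Combining these observations with the $B$-duality step proves both the equivalence and the identification of generic points.

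The main obstacle I anticipate is the geometric step: cleanly justifying that a Zariski-dense orbit is open and that its tangent space at $x$ coincides with $d\rho(\g)x$. Both facts rest on standard algebraic-geometric results (smoothness of orbits of an algebraic group, the open-orbit lemma), and they are especially clean here since $V$ is a vector space; once they are invoked, the remainder of the proof reduces to the short linear-algebra argument above, using only non-degeneracy of $B$.
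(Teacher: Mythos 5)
Your proof is correct. The paper itself states this theorem as an imported result from \cite{Sa2} and gives no proof here, but your argument --- identifying $\ker\Phi_{d\rho}(x\otimes\cdot)$ with the annihilator of $d\rho(\g)x$ via non-degeneracy of $B$, and then invoking the standard facts that the orbit $\rho(G)x$ is open iff its tangent space $d\rho(\g)x$ equals $V$ and that a dense orbit in an irreducible variety is the unique open orbit --- is exactly the standard infinitesimal criterion underlying the cited result, and it correctly accounts for the independence from the choice of $B$ and the coincidence of generic points.
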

The theory of PVs have the notion of the regularity.
\begin{defn}[quasi-regular and regular, \text {\cite [p.119]{SKKO}}]\label {defn;regularPV}
Let $(G,\rho,V)$ a PV with a generic point $x$.
Let $G_x=\{g\in G\mid \rho (g)x=x\}$ the isotropy subgroup of $G$ at $x$.
Let $\g _1$ be a subalgebra of $\Lie (G)$ generated by $\Lie (G_x)$ and $[\Lie (G),\Lie (G)]$ and put $\overline {X}_1=\{\omega \in \g ^*=\Hom (\g,\C)\mid \omega \mid _{\g _1}=0\}$.
Then the PV $(G,\rho,V)$ is called quasi-regular if there exists $\omega \in \overline {X}_1$ and a rational map $\varphi _{\omega }:\rho (G)x\rightarrow V^*$ such that 
$$
\varphi _{\omega }(\rho (g)x ^{\prime })=\rho (g)^*\varphi _{\omega }(x ^{\prime }),\quad \langle d\rho (A)x^{\prime },\varphi _{\omega }(x ^{\prime })\rangle =\omega (A)
$$
for any $A\in \Lie (G)$, $g\in G$ and $x ^{\prime }\in \rho (G)x$ and that the image of $\varphi _{\omega }$ is Zariski-dense in $V^*$.
In this case, $\omega $ is called non-degenerate.
In particular, if there exists a character $\chi :G\rightarrow \C$ which corresponds to some relative invariant such that $\omega =d\chi $, then the PV $(G,\rho ,V)$ is called regular.
\end{defn}
In general, we need to distinguish the notions of regular and quasi-regular.
However, under the assumption that a group in a triplet is regular, we have the following theorem.
\begin{theo}[\text {\cite [Proposition 1.3]{SKKO}}]
Let $(G,\rho,V)$ be a PV and assume that $G$ is reductive.
Then $(G,\rho,V)$ is regular if and only if it is quasi-regular.
\end{theo}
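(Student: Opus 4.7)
The plan is to prove the two implications separately. The forward direction (regular $\Rightarrow$ quasi-regular) is essentially a calculation: given a character $\chi$ and a relative invariant $f$ with $f(\rho(g)x') = \chi(g)f(x')$, define $\varphi_{\omega}(x') := f(x')^{-1}\,\mathrm{grad}_{x'} f$ on the non-vanishing locus of $f$, and set $\omega := d\chi$. Differentiating the cocycle law in $g$ at the identity and in $x'$ yields respectively the pairing identity $\langle d\rho(A)x', \varphi_{\omega}(x')\rangle = \omega(A)$ and the equivariance $\varphi_{\omega}(\rho(g)x') = \rho(g)^{*}\varphi_{\omega}(x')$. Zariski-density of the image in $V^{*}$ follows because $\varphi_{\omega}$ is a dominant $G$-equivariant map whose image contains the open orbit of the contragredient representation.

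For the reverse direction (quasi-regular $\Rightarrow$ regular), I would integrate the infinitesimal data $(\omega, \varphi_{\omega})$ into a character and a relative invariant. Consider the $1$-form $\eta$ on the open orbit defined by $\eta_{x'}(v) := \langle v, \varphi_{\omega}(x')\rangle$. The pairing relation says $\iota_{d\rho(A)}\eta = \omega(A)$ is constant along the orbit, and the equivariance makes $\eta$ invariant under the $G$-action on $\rho(G)x$. Cartan's formula $L_{d\rho(A)}\eta = \iota_{d\rho(A)}\,d\eta + d(\iota_{d\rho(A)}\eta)$ then forces $\iota_{d\rho(A)}\,d\eta = 0$ for every $A \in \g$; since the vectors $d\rho(A)x'$ span the tangent space to the orbit, $d\eta = 0$, i.e.\ $\eta$ is closed. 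A local primitive has the form $\log f$ for a candidate relative invariant~$f$.

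The main obstacle is promoting this local primitive to a single-valued rational function on $\rho(G)x$ and then extending it to $V$; this amounts to lifting the Lie-algebra functional $\omega$ to an algebraic character $\chi \colon G \to \C^{*}$ with $d\chi = \omega$. In general, functionals on $\g$ vanishing on $[\g,\g]$ need not exponentiate to algebraic characters of $G$, so some additional input is required. The decisive input is the Zariski-density of the image of $\varphi_{\omega}$ in $V^{*}$, which forces the contragredient triplet $(G,\rho^{*},V^{*})$ to be a PV as well. Combined with the reductivity of $G$ and the decomposition $G = G' \cdot Z(G)^{\circ}$ together with $\omega|_{[\g,\g]} = 0$ and $\omega|_{\Lie(G_x)} = 0$, a Rosenlicht-type argument integrates $\omega$ to an algebraic character $\chi$, and one recovers the relative invariant as $f(\rho(g)x_0) := \chi(g)$ on the open orbit (well-defined because the isotropy lies in $\Ker\chi$), extending by rationality to all of $V$. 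The reductivity hypothesis enters exactly at the lifting step: outside this setting quasi-regularity need not imply regularity, which is why the excerpt distinguishes the two notions.
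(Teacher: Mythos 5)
First, a point of comparison: the paper does not prove this statement at all --- it is quoted from \cite[Proposition 1.3]{SKKO} and used as a black box, so there is no internal proof to measure yours against; your attempt has to be judged on its own. Under the definition actually in force here, your forward direction is essentially vacuous: ``regular'' is \emph{defined} as the special case of ``quasi-regular'' in which the non-degenerate $\omega$ is $d\chi$ for a character attached to a relative invariant, so regular $\Rightarrow$ quasi-regular requires no gradient computation. Moreover your justification of the Zariski-density of the image of $\varphi_{\omega}$ (``because $\varphi_{\omega}$ is a dominant map'') assumes what is to be shown: dominance of $x'\mapsto \mathrm{grad}\log f(x')$ is exactly the non-degeneracy condition and does not follow from the mere existence of a relative invariant. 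The example closing the paper, $(GL_1\times Sp_n\times SO_3,\ \Box\otimes\Lambda_1\otimes\Lambda_1,\ M(2n,3))$, is precisely a PV with a relative invariant that is not regular.

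The genuine gap is in the reverse direction, at the step you yourself flag as ``the main obstacle'' and then dispose of with ``a Rosenlicht-type argument integrates $\omega$ to an algebraic character.'' That step fails as stated: the \emph{given} non-degenerate $\omega$ need not integrate to any algebraic character, even for reductive $G$. The non-degenerate functionals form a cone in the $\C$-vector space $\overline{X}_1$ (if $\omega$ is non-degenerate with map $\varphi_{\omega}$, then $c\omega$ is non-degenerate with map $c\varphi_{\omega}$ for every $c\neq 0$), whereas the differentials $d\chi$ of algebraic characters form only a finitely generated lattice there; a transcendental multiple of some $d\chi$ is still non-degenerate but exponentiates to nothing. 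The correct argument does not integrate the given $\omega$ but \emph{replaces} it: one shows that for reductive $G$ the lattice of available $d\chi$'s spans $\overline{X}_1$ over $\Q$, and that the non-degenerate $\omega$'s form a nonempty Zariski-open subset of $\overline{X}_1$, which must therefore contain a lattice point; that lattice point is the $d\chi$ witnessing regularity. Your closed-one-form and Cartan-formula computation is a reasonable way to produce the primitive $\log f$ once such a $\chi$ is in hand, but it cannot substitute for the lattice-versus-open-cone argument, which is where the reductivity hypothesis actually does its work.
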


\section {PVs and graded Lie algebras}
Remain of this paper, we shall consider how to describe the regularity of PVs using the theory of graded Lie algebras.
In \cite {Ru}, H.Rubenthaler defined the following condition $(P)_x$:
$$
\text {$(P)_x\colon $\quad $x\not \in [[\g,\g ],x]$}
$$
and proved that the condition $(P)_x$ is closely related to $\sl _2$-triplets and relative invariants of a representation (see \cite [pp.53--58, section 4]{Ru}) under the Assumption $(H)$.
\begin{defn}[\text {Assumption $(H)$, \cite [p.53]{Ru}}]
We say that a representation $(\g,\pi,U)$ of a Lie algebra $\g $ satisfies Assumption $(H)$ when the followings hold:
\begin{enumerate}
\item {The Lie algebra $\g $ is a reductive Lie algebra with one-dimensional center: $$\g =Z(\g )\oplus [\g,\g],\quad \dim Z(\g )=1,$$}
\item {We suppose also that $Z(\g)$ acts by a non-trivial character (i.e. $\pi (Z(\g ))=\C \mathrm {Id}$)}.
\end{enumerate}
\end{defn}
If $(\Lie (G),d\rho ,V)$ satisfies the Assumption $(H)$, it means that $(G,\rho,V)$ is a group representation of a reductive group $G$ with $1$-dimensional scalar multiplication.
\par 
Remain of this paper, we shall define similar conditions and consider relations between these conditions and the regularity of PVs.
\begin{defn}
Let $(\g,\pi ,U,\U,B)$ be a standard pentad.
When elements $H\in \g$ and $X\in U$ (respectively $H\in \g$ and $Y\in U^*$) have an element $\eta \in U^*$ (respectively $\xi \in U$) such that a triple $(\eta ,H,X)$ (respectively $(Y,H,\xi $) is an $\sl _2$-triple, we denote that $(P)_{(\cdot ,H,X)}$ (respectively, $(P)_{(Y,H,\cdot )}$).
Moreover, if an element $\eta $ (respectively, $\xi $) is determined from $H$ and $X$ (respectively, $H$ and $Y$) uniquely, we denote that $(P)^!_{(\cdot ,H,X)}$ (respectively, $(P)^!_{(Y,H,\cdot )}$).
\end{defn}
\begin{pr}
Let $(\g,\pi,U,\U,B)$ be a standard pentad.
If there exists elements $H\in \g$ and $X\in U$ satisfying $(P)^!_{(\cdot ,H,X)}$, then the pentad $(\g,\pi,U,\U,B)$ is prehomogeneous with generic point $X$.
\end{pr}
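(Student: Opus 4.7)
The aim is to show that the adjoint map $\ad X : \U \to \g$, $\phi \mapsto \Phi_\pi(X \otimes \phi)$, is injective; this is precisely the criterion (stated right after the definition of prehomogeneous pentad) for $X$ to be a generic point. Let $\phi \in \U$ with $[X, \phi] = 0$. The plan is to exhibit a second completion of the $\sl_2$-triple from $\phi$ and then invoke uniqueness. The natural candidate is
\[
\eta' := \eta + \phi \in \U.
\]
By bilinearity and $[X, \phi] = 0$ one has $[X, \eta'] = [X, \eta] + [X, \phi] = H$, while $[H, X] = 2X$ is unchanged. Thus $(\eta', H, X)$ satisfies two of the three $\sl_2$-triple relations, and the remaining relation $[H, \eta'] = -2\eta'$ reduces to $[H, \phi] = -2\phi$. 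If this is known, then $\eta'$ is a completion of the $\sl_2$-triple for $(H, X)$; the uniqueness clause of $(P)^!_{(\cdot, H, X)}$ forces $\eta' = \eta$, hence $\phi = 0$, so $\ad X|_{\U}$ is injective.

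The substantive step is therefore to verify $[H, \phi] = -2\phi$ for every $\phi$ in $K := \ker(\ad X|_{\U})$. First, $K$ is $\ad H$-stable by the Jacobi identity: for $\phi \in K$,
\[
[X, [H, \phi]] = -[[H, X], \phi] + [H, [X, \phi]] = -2[X, \phi] + 0 = 0.
\]
Second, I would analyze $\ad H|_K$ by considering the $\sl_2$-submodule of $L(\g, \pi, U, \U, B)$ generated by an $\ad H$-eigenvector $\phi \in K$ of weight $\lambda$: since $[X, \phi] = 0$, such a $\phi$ is a highest-weight vector for $\sl_2 = \langle \eta, H, X \rangle$, and the iterated identity $[X, [\eta, \phi]] = [H, \phi]$ together with the $\Z$-grading of $L$ and the minimality of $L$ (Theorem \ref{th;stapfund}) should constrain $\lambda$ to be $-2$.

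The main obstacle is this last step, namely pinning down that the only $\ad H$-weight that can appear on $K \subseteq U_{-1}$ is $-2$. In the case where $H$ can be taken as a grading element of the pentad (which is the natural setting inspired by Rubenthaler's Assumption $(H)$, where $Z(\g)$ provides a scalar-acting element), the identity $[H, \psi] = -2\psi$ for every $\psi \in \U$ is immediate, and the proof collapses to the perturbation argument above; in the general statement, the argument must exploit the specific construction of $L$ from the standard pentad to rule out other weights on $K$.
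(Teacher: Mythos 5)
Your core perturbation argument is precisely the paper's proof: the author takes the unique $Y$ completing $(\,\cdot\,,H,X)$, supposes $[X,\eta]=0$ for some nonzero $\eta\in\U$, and asserts that $(Y+\eta,H,X)$ is a second $\sl_2$-triple, contradicting uniqueness. The step you single out as substantive --- verifying $[H,\eta]=-2\eta$ --- is exactly the step the paper passes over in silence: the relation $[H,Y+\eta]=-2(Y+\eta)$ is never checked. So your write-up is more careful than the source, and you have correctly located the weak point.

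The gap is genuine, and the route you sketch for closing it cannot succeed for arbitrary $H\in\g$; in fact the proposition as stated fails without an extra hypothesis on $H$. Take $\g=\C H$ with $B(H,H)=1$, $U=\C x_1\oplus\C x_2$ with $\pi(H)=\diag(2,\lambda)$ for some fixed $\lambda\neq 2$, and $\U=U^*$ with dual basis $y_1,y_2$, so that $[H,y_1]=-2y_1$, $[H,y_2]=-\lambda y_2$ and $[x_1,\alpha y_1+\beta y_2]=2\alpha H$. This is a standard pentad; the unique $\sl_2$-completion of $(\,\cdot\,,H,x_1)$ is $\eta=\tfrac12 y_1$, so $(P)^!_{(\cdot,H,x_1)}$ holds, yet $\ad x_1$ kills $y_2$ and indeed no generic point can exist since $\dim\U>\dim\g$. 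The eigenvalue of $\ad H$ on the kernel is $-\lambda$, which is arbitrary, so your hoped-for conclusion that only the weight $-2$ occurs on $K$ is unattainable; worse, the $\sl_2$-theory you invoke points the other way, since an $\ad X$-annihilated $\ad H$-eigenvector of weight $-2$ generates a non-integrable highest-weight module, so in locally finite situations the weight $-2$ \emph{never} occurs on $K$ and uniqueness of $Y$ holds vacuously whether or not $K=0$. The statement and proof are salvaged exactly in the case you identify: when $H$ is a grading element (or more generally when $\ad H$ acts by $-2$ on all of $\U$), the relation $[H,\eta]=-2\eta$ is automatic and the perturbation argument closes. Since the proposition is only invoked later with $H=H_0$ a grading element, that hypothesis should simply be added to the statement; your instinct to restrict to that setting is the correct resolution, not a shortcut.
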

\begin{proof}
Take a unique element $Y=Y(H,X)$ such that $(Y,H,X)$ is an $\sl _2$-triple.
If we suppose that $X$ is not a generic point, there exists a non-zero element $\eta \in \U$ such that $[X,\eta ]=0$.
Then we have two $\sl _2$-triples $(Y,H,X)$ and $(Y+\eta ,H,X)$, of course $Y\neq Y+\eta $.
It contradicts to the assumption $(P)^!_{(\cdot ,H,X)}$.
\end{proof}
\begin{col}
Let $(G,\rho,V)$ be a triplet with reductive group $G$.
If a pentad $(\Lie (G),d\rho,V,V^*,B)$ has elements $H\in \Lie (G)$ and $X\in V$ satisfying $(P)^!_{(\cdot ,H,X)}$, then the triplet $(G,\rho,V)$ is a PV.
\end{col}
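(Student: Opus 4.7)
The plan is simply to chain the preceding proposition with the equivalence theorem cited earlier in this section (the one attributed to \cite{Sa2}). First, I would apply the previous proposition directly to the standard pentad $(\Lie(G),d\rho,V,V^*,B)$: the hypothesis that there exist $H\in \Lie(G)$ and $X\in V$ satisfying $(P)^!_{(\cdot,H,X)}$ is exactly the hypothesis of that proposition, so I would immediately conclude that this pentad is prehomogeneous with generic point $X$.

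Next, I would invoke the equivalence theorem stated earlier, which asserts that for a finite-dimensional reductive algebraic group $G$, the triplet $(G,\rho,V)$ is a PV if and only if the pentad $(\Lie(G),d\rho,V,V^*,B)$ is prehomogeneous, for any non-degenerate invariant symmetric bilinear form $B$. Since $G$ is reductive by hypothesis, this equivalence applies, and combining it with the conclusion of the first step yields that $(G,\rho,V)$ is a PV (in fact with $X$ as a generic point, by the final clause of the same theorem).

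I do not anticipate any real obstacle: the corollary is essentially a one-line consequence of the preceding proposition together with the cited equivalence theorem. The only minor point worth checking is that the form $B$ appearing in the pentad hypothesis is genuinely a non-degenerate invariant symmetric bilinear form, so that the equivalence theorem is applicable; this is automatic, since such a form is part of the very definition of a standard pentad, and the reductivity of $G$ guarantees that such forms exist on $\Lie(G)$.
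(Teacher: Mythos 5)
Your proposal is correct and is exactly the derivation the paper intends: the corollary is stated without proof precisely because it follows by chaining the preceding proposition (which gives prehomogeneity of the pentad with generic point $X$) with the cited equivalence theorem from \cite{Sa2} relating prehomogeneous pentads to PVs for reductive $G$. Your closing remark that $B$ is automatically a non-degenerate invariant form by the definition of a standard pentad is the right sanity check, so nothing is missing.
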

Similarly, we have the following claim.
\begin{pr}
Let $(\g,\rho,V,V^*,B)$ be a prehomogeneous pentad and $X\in V$ be a generic point of it.
If there exists an element $H\in \g$ satisfying $(P)_{(\cdot ,H,X)}$, then $H$ and $X$ satisfy $(P)^!_{(\cdot ,H,X)}$.
\end{pr}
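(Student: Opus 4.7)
The plan is to derive uniqueness of $\eta$ directly from the generic point hypothesis on $X$. Suppose for contradiction (or merely to compare) that both $\eta_{1},\eta_{2}\in V^{*}$ yield $\sl_{2}$-triples $(\eta_{1},H,X)$ and $(\eta_{2},H,X)$. The first two relations $[H,X]=2X$ and $[H,\eta_{i}]=-2\eta_{i}$ are identical in the two triples, so the only content to extract is from the third relation.

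From $[X,\eta_{1}]=H=[X,\eta_{2}]$, subtract to obtain $[X,\eta_{1}-\eta_{2}]=0$ in $\g$. Now I would invoke the definition of a generic point of a prehomogeneous pentad: since $X\in V$ is generic, the map
\[
\ad X=\Phi_{\rho}(X\otimes \cdot)\colon V^{*}\longrightarrow \g
\]
is injective. Applied to $\eta_{1}-\eta_{2}\in V^{*}$, this gives $\eta_{1}=\eta_{2}$, which is exactly the uniqueness condition required by $(P)^{!}_{(\cdot,H,X)}$.

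There is essentially no obstacle here: once one translates the third bracket relation of the $\sl_{2}$-triple into the language of $\ad X$, the conclusion is immediate from the injectivity built into the definition of generic point. The only thing to be careful about is the direction of the brackets, namely that in the graded Lie algebra $L(\g,\rho,V,V^{*},B)$ the commutator $[X,\eta_{i}]$ lands in $U_{0}\simeq \g$ and is governed by the $\Phi$-map $\Phi_{\rho}$, so that the injectivity hypothesis supplied by prehomogeneity with generic point $X$ applies verbatim.
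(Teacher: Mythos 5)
Your proposal is correct and is exactly the argument the paper intends: the paper gives no explicit proof here (it only says ``Similarly''), and the natural counterpart of the preceding proposition's proof is precisely your observation that two triples $(\eta_1,H,X)$ and $(\eta_2,H,X)$ force $[X,\eta_1-\eta_2]=0$, whence $\eta_1=\eta_2$ by injectivity of $\ad X=\Phi_{\rho}(X\otimes\cdot)$ on $V^*$, which is the definition of $X$ being a generic point of the prehomogeneous pentad.
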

Under these notations, we have the main theorem of this paper.
\begin{theo}
Let $(G,\rho,V)$ be a triplet with a reductive group $G$.
Assume that $(\Lie (G),d\rho,V)$ satisfies the Assumption $(H)$.
Then the following conditions are equivalent:
\begin{enumerate}
\item {The triplet $(G,\rho ,V)$ is a regular PV},
\item {For any non-degenerate invariant bilinear form $B$ on $\Lie (G)$, a pentad $(\Lie (G),d\rho ,V,V^*,B)$ has elements $X\in V$ and $Y\in V^*$ such that $(Y,H_0,X)$ is an $\sl _2$-triple and that the conditions $(P)^!_{(\cdot ,H_0,X)}$ and $(P)^!_{(Y,H_0,\cdot )}$ hold, where $H_0$ is a grading element of the pentad}.
\end{enumerate}
\end{theo}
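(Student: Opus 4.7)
The plan is to combine the Proposition stated immediately above the theorem with Rubenthaler's characterization of regularity via the condition $(P)_x\colon x\not \in [[\g ,\g ],x]$ under Assumption $(H)$ (\cite [section 4]{Ru}), together with the classical Sato--Kimura criterion that, for a reductive PV with one-dimensional scalar multiplication, regularity is equivalent to $(G,\rho ^*,V^*)$ being a PV.

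The direction $(2)\Rightarrow (1)$ is the easier one. Given $X\in V$ and $Y\in V^*$ with the two $(P)^!$ conditions, the first Proposition in this section shows that $X$ is a generic point of the pentad $(\Lie (G),d\rho ,V,V^*,B)$, so $(G,\rho ,V)$ is a PV by the earlier theorem identifying prehomogeneous pentads and PVs of reductive groups. A symmetric (dual) application of the same Proposition to $(P)^!_{(Y,H_0,\cdot )}$ shows that $(G,\rho ^*,V^*)$ is also a PV. Under the reductivity of $G$ and Assumption $(H)$, the Sato--Kimura criterion then yields regularity of $(G,\rho ,V)$.

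For $(1)\Rightarrow (2)$, regularity together with Assumption $(H)$ implies, by Rubenthaler's result in \cite [section 4]{Ru}, that there exists a generic point $X\in V$ of $(G,\rho ,V)$ satisfying $(P)_X$. Such an $X$ can be placed in an $\sl _2$-triple in the graded Lie algebra $L(\Lie (G),d\rho ,V,V^*,B)$. Since $Z(\g )$ is one-dimensional and $H_0\in Z(\g )$ is the grading element, the bracket relations $[H_0,X]=2X$ and $[H_0,Y]=-2Y$ hold automatically for every $X\in V$ and $Y\in V^*$; after rescaling $Y$, the $\sl _2$-triple may therefore be taken with semisimple part exactly $H_0$, giving $(P)_{(\cdot ,H_0,X)}$. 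Because $X$ is a generic point, $\ad X\colon V^*\to \g $ is injective, so $Y$ is unique, upgrading $(P)$ to $(P)^!_{(\cdot ,H_0,X)}$ (this is precisely the Proposition stated just before the theorem). To obtain $(P)^!_{(Y,H_0,\cdot )}$, regularity also ensures that $(G,\rho ^*,V^*)$ is a PV, and one shows that the $Y$ produced by the $\sl _2$-triple is itself a generic point of the dual; the dual version of the same Proposition then supplies the uniqueness of $X$ given $H_0$ and $Y$.

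The main obstacle I expect is this last step: verifying that the particular $Y$ appearing in the $\sl _2$-triple $(Y,H_0,X)$ is a generic point of $(G,\rho ^*,V^*)$, i.e.\ that $\ad Y\colon V\to \g $ is injective. This does not follow from the $\sl _2$-relations alone and must be extracted either from the $\sl _2$-module decomposition of $L(\Lie (G),d\rho ,V,V^*,B)$ — which makes $X$ and $Y$ play symmetric roles between $U_1$ and $U_{-1}$ — or from a direct orbit-dimension comparison using the Sato--Kimura duality, so that the pair $(X,Y)$ simultaneously witnesses both $(P)^!$ conditions with the same grading element $H_0$.
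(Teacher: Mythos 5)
Your proposal has genuine gaps in both directions, and both trace back to the same misconception. In $(2)\Rightarrow(1)$ you invoke a ``Sato--Kimura criterion'' asserting that, for a reductive PV with one-dimensional scalar multiplication, regularity is equivalent to $(G,\rho ^*,V^*)$ being a PV. This is false: for reductive $G$ the dual triplet $(G,\rho ^*,V^*)$ is a PV whenever $(G,\rho ,V)$ is (Sato's duality), so the condition you reduce to is automatic and cannot detect regularity. The paper's own example $(GL_1\times Sp_n\times SO_3,\ M(2n,3))$ is exactly a counterexample: it is a non-regular reductive PV satisfying Assumption $(H)$ whose dual is a PV. What the paper actually does is verify \emph{quasi-regularity} from the definition: it defines $\varphi (x^{\prime })$ as the unique $\eta $ with $(\eta ,H_0,x^{\prime })$ an $\sl _2$-triple (unique and rational because $x^{\prime }$ is generic, so $\eta $ solves the linear equation $\ad (x^{\prime })\eta =H_0$), checks equivariance and the pairing identity $\langle d\rho (A)x^{\prime },\varphi (x^{\prime })\rangle =B(A,H_0)=\omega (A)$ with $\omega \in \overline {X}_1$, and then uses $(P)^!_{(Y,H_0,\cdot )}$ to conclude that the image of $\varphi $, which is the single orbit $\rho ^*(G)Y$, is Zariski-dense in $V^*$; quasi-regular then equals regular by reductivity. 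You have all the needed ingredients ($X$ generic, $Y$ generic in the dual, joined by an $\sl _2$-triple through $H_0$), but you route them through a false equivalence instead of through the definition of quasi-regularity.

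In $(1)\Rightarrow(2)$ the step you flag as the ``main obstacle'' --- that the $Y$ in the $\sl _2$-triple is a generic point of the dual --- is indeed the crux, and neither of your suggested repairs can work: the $\sl _2$-module structure does not make $U_1$ and $U_{-1}$ play symmetric roles with respect to genericity, as the same example shows ($(\eta ,H_0,X)$ is an $\sl _2$-triple with $X$ generic, yet $\rank \eta =2$, so $\rho ^*(G)\eta $ is not dense in $M(2n,3)$). More fundamentally, your starting point --- Rubenthaler's condition $(P)_x$ --- is strictly weaker than regularity (it corresponds to the existence of a relative invariant), so it can only ever produce $(P)^!_{(\cdot ,H_0,X)}$ and never the second condition. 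The paper instead starts from the full definition of regularity: the non-degenerate $\omega $ gives a rational equivariant map $\varphi _{\omega }$ with Zariski-dense image in $V^*$; writing $\omega (A)=B(A,cH_0)$ with $c\neq 0$ and comparing with $B(A,[x,\varphi _{\omega }(x)])$ identifies $[x,\varphi _{\omega }(x)]=cH_0$, so $Y=(1/c)\varphi _{\omega }(x)$ completes the $\sl _2$-triple and its orbit is the (dense) image of $\varphi _{\omega }$, which is precisely what yields $(P)^!_{(Y,H_0,\cdot )}$. You need this use of the density clause in Definition \ref {defn;regularPV}; it cannot be recovered from the algebraic data alone.
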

\begin{proof}
\par
(1. implies 2.)
\par 
We assume that the triplet $(G,\rho,V)$ is a regular PV with a generic point $x\in V$.
Then there exists a non-degenerate linear map $\omega :Z(\Lie (G))\rightarrow \C$ and a rational map $\varphi _{\omega }:\rho (G)x\rightarrow V^*$ such that 
$$
\varphi _{\omega }(\rho (g)x^{\prime })=\rho ^*(g)\varphi _{\omega }(x^{\prime }),
\quad
\langle d\rho (A)x^{\prime }, \varphi _{\omega }(x^{\prime }) \rangle =\omega (A)
$$
for any $g\in G$, $A\in \Lie (G)$ and $x^{\prime }\in \rho (G)x$.
From the assumption that $B$ is a non-degenerate bilinear form, there exists an element $H\in \Lie (G)$ such that $\omega (A)=B(A,H)$.
Since $\omega \mid _{[\Lie (G),\Lie (G)]}=0$, $H=c H_0\in Z(\Lie (G))$.
Here, since $\omega $ is non-degenerate, $c\neq 0$.
Thus, in $L(\Lie (G),d\rho,V,V^*,B)$, we have an equation 
$$
B(A,[x^{\prime }, \varphi _{\omega }(x^{\prime })])=\langle d\rho (A)x^{\prime },\varphi _{\omega }(x^{\prime })\rangle =\omega (A)=B(A,H)
$$
for any $A\in \Lie (G)$ and $x^{\prime }\in \rho (G)x$.
From this, we can deduce that 
$$
[x,\varphi _{\omega }(x)]=H=cH_0.
$$
Then, $(Y,H_0,X)=((1/c)\varphi _{\omega }(x),H_0,x)$ is an $\sl _2$-triple.
Since $X$ belongs to the Zariski-dense orbit, we have $(P)^!_{(\cdot ,H_0,X)}$.
Moreover, since the orbit $\rho ^*(G)Y=(\text {the image of $\varphi _{\omega }$})$ is Zariski-dense in $V^*$, we have $(P)^!_{(Y,H_0,\cdot )}$.
Thus we have the condition 2.
\par 
(2. implies 1.)
\par 
We suppose the condition 2 and take an arbitrary non-degenerate invariant bilinear form $B$.
Then $(G,\rho ,V)$ is a PV with a Zariski-dense orbit $\rho (G)X\subset V$.
We can define a map $\varphi :\rho (G)X\rightarrow V^*$ by
$$
\text {$(x^{\prime },H_0,\varphi (x^{\prime }))$ is an $\sl _2$-triple for any $x^{\prime }\in \rho (G)X$}
$$
satisfying 
$$
\rho ^*(g)\varphi (x^{\prime })=\varphi (\rho (g)x^{\prime })\quad (g\in G,x^{\prime }\in \rho (G)X)
$$
(see \cite [proof of Proposition 4.2.7]{Ru}).
That is, $\eta =\varphi (x^{\prime })$ is a unique solution of a linear equation $\ad (x^{\prime })\eta =H_0$.
Thus, $\varphi $ is a rational map.
If we define $\omega :\Lie (G)\rightarrow \C$ by $\omega (A)=B(A,H_0)$, then we have an equation 
$$
\langle d\rho (A)x^{\prime },\varphi (x^{\prime }) \rangle =B(A,H_0)=\omega (A).
$$
In the notations of Definition \ref {defn;regularPV}, $\omega $ clearly belongs to $\overline {X}_1$.
From the assumption $(P)^!_{(Y,H_0,\cdot )}$, we have that $(\text {the image of $\varphi $})=\rho ^*(G)\varphi (X)=\rho ^*(G)Y$ is Zariski-dense in $V^*$.
Thus, we have that $(G,\rho,V)$ is quasi-regular, and thus, regular.
\end{proof}
\begin{defn}
We define a bilinear form $T_n$ on $\gl _n$ by 
$$
T_n(X,Y)=\Tr (XY)
$$
for any $X,Y\in \gl _n$.
Clearly, the bilinear form $T_n$ is non-degenerate and invariant.
Moreover, for a Lie subalgebra $\l\subset \gl _n$, we also denote the restriction $T_n\mid _{\l \times \l }$ by the same symbol $T_n$.
\end{defn}
\begin{ex}[\text {cf. \cite [pp.103--105]{SK}}]
An irreducible PV $(G,\rho,V)=(GL_1\times Sp _{n}\times SO_3,\Box \otimes \Lambda _1\otimes \Lambda _1, M(2n,3))$ $(2n\geq 3)$ is important since it is an example of a non-regular PV which has a relative invariant (see \cite [Proposition 19, p.105]{SK}).
Let us show this using a pentad $(\Lie (G),d\rho,V,V^*,B)=(\gl _1\oplus \sp _{n}\oplus \so _2, \Box \otimes \Lambda _1\otimes \Lambda _1, M(2n,3), M(2n,3), \Tr _1\oplus \Tr _{2n}\oplus \Tr_2)$, where 
$$
\sp _n=\{A\in M(2n,2n)\mid A\cdot J_n+J_n\cdot {}^tA=O_{2n}\},\quad J_n=\left (\begin{array}{c|c}O_n&I_n\\ \hline -I_n&O_n\end{array}\right ).
$$
The representations $d\rho =\Box\otimes \Lambda _1\otimes \Lambda _1$ and its dual of $\Lie (G)=\gl _1\oplus \sp _{n}\oplus \so _3$ is given by:
\begin{align*}
d\rho (a,A,B)v=av+Av-vB,\quad
d\rho ^*(a,A,B)u=-au+Au-uB, \quad (a\in \gl _1,A\in \sp _{n},B\in \so _3)
\end{align*}
via a bilinear form 
$$
\langle v,u\rangle =\Tr ({}^t v\cdot J_{n}\cdot u) \quad (v,u\in M(2n,3)).
$$
The $\Phi $-map $\Phi _{d\rho }$ of this pentad is given by
$$
\Phi _{d\rho }(v\otimes u)=\left (
\Tr ({}^t v\cdot J_{n}\cdot u) ,\quad -\frac{1}{2}\left (v\cdot {}^t u+u\cdot {}^t v\right )J_n,\quad \frac{1}{2}\left ({}^t v\cdot J_n\cdot u+{}^t u\cdot J_n\cdot v\right )
\right )
$$
We can easily check that the pentad has a grading element $H_0$ and a generic point $X$
$$
H_0=(2,O_{2n},O_3)\in \Lie (G),\quad 
X=
\left (
\begin{array}{c}
\begin{array}{ccc}
1&0&0\\0&1&0
\end{array}\\ \hline 
O_{n-2,3}\\ \hline 
\begin{array}{ccc}
0&0&1 
\end{array}\\ \hline
O_{n-1,3}
\end{array}
\right )\in V=M(2n,3)
$$
and satisfies the Assumption $(H)$.
We can easily check that $H_0$ and $X$ satisfy $(P)^!_{(\cdot ,H_0,X)}$.
In fact, if we put 
$$
\eta =\left (\begin{array}{c}\begin{array}{ccc}0&0&-1\end{array}\\ \hline O_{n-1,3}\\ \hline \begin{array}{ccc}1&0&0\end{array}\\ \hline O_{n-1,3}\end{array}\right ), 
$$
then we have an $\sl _2$-triple $(\eta ,H_0,X)$, and thus $(P)_{(\cdot ,H_0,X)}$.
Since $X$ is a generic point, we have $(P)^!_{(\cdot ,H_0,X)}$.
From the result of H.Rubenthaler, \cite [Theorem 4.2.3]{Ru}, we can deduce that there exists a non-trivial relative invariant on $V=(\text {the Zariski closure of $\rho (G)X$})$.
However, $(G,\rho,V)$ is not regular.
If we suppose that $(G,\rho,V)$ is regular, then we have an $\sl _2$-triple $(Y^{\prime },H_0,X^{\prime })$ such that $(P)^!_{(\cdot ,H_0,X^{\prime })}$ and $(P)^!_{(Y^{\prime },H_0,\cdot )}$ hold.
Then, there exists $g\in G$ such that $X^{\prime }=\rho (g)X$.
Then, we have that $\eta =\rho ^*(g^{-1})Y^{\prime }$ belongs to the Zariski-dense orbit $\rho ^*(G)Y^{\prime }$ in $V^*$.
However, since $\rank \eta =2$, the orbit $\rho ^*(G)\eta $ cannot be Zariski-dense in $M(2n,3)$.
It is a contradiction.
\end{ex}

\medskip
\begin{flushleft}
Nagatoshi Sasano\\
Institute of Mathematics-for-Industry\\
Kyushu University\\
744, Motooka, Nishi-ku, Fukuoka 819-0395\\
Japan\\
E-mail: n-sasano@math.kyushu-u.ac.jp
\end{flushleft}

\end{document}